\documentclass[10pt,a4,leqno]{amsart}
\usepackage{amsthm}
\usepackage{amsmath}
\usepackage{amssymb}
\usepackage{graphicx}
\usepackage{bbm}
\usepackage[
colorlinks=true, citecolor=blue, linkcolor=blue, urlcolor=red]{hyperref}
\usepackage{color}
\usepackage[all]{xy}
\allowdisplaybreaks

\makeatletter

\@addtoreset{equation}{section}
\makeatother

\makeatletter
\@namedef{subjclassname@2020}{%
  \textup{2020} Mathematics Subject Classification}
\makeatother

\newtheoremstyle{my theoremstyle}
{1.0em}                    
    {1.0em}                    
    {\itshape}                   
    {}                           
    {\scshape}                   
    {.}                          
    {.5em}                       
    {}  

\newtheoremstyle{dfn}
{1.0em}                    
    {1.0em}                    
    {}                   
    {}                           
    {\scshape}                   
    {.}                          
    {.5em}                       
    {}  
    
\theoremstyle{my theoremstyle}
   \newtheorem{thm}{Theorem}[section]
   \newtheorem{lem}[thm]{Lemma}
   \newtheorem{prop}[thm]{Proposition}
   \newtheorem{cor}[thm]{Corollary}
   
\theoremstyle{dfn}

\theoremstyle{remark}   
   
   \newtheorem{rmk}[thm]{{\scshape Remark}}

\newcommand{\C}{\mathbb{C}}
\newcommand{\Q}{\mathbb{Q}}
\newcommand{\Z}{\mathbb{Z}}

\newcommand{\R}{\mathbb{R}}

\newcommand{\Ch}{\operatorname{CH}}
\newcommand{\jac}{\operatorname{Jac}}

\numberwithin{equation}{section}

\date{\today}

\begin{document}
\title{Non-torsion algebraic cycles on the Jacobians of Fermat quotients}
\author{Yusuke Nemoto}
\date{\today}
\address{Graduate School of Science and Engineering, Chiba University, 
Yayoicho 1-33, Inage, Chiba, 263-8522 Japan.}
\email{y-nemoto@waseda.jp}
\keywords{Abel-Jacobi map; Ceresa cycle; Fermat quotient.}
\subjclass[2020]{14C25, 14F35, 14H40}

\maketitle

\begin{abstract}
We study the Abel-Jacobi image of the Ceresa cycle $W_{k, e}-W_{k, e}^-$, where $W_{k, e}$ is the image of the $k$th symmetric product of a curve $X$ with a base point $e$ on its Jacobian variety. 
For certain Fermat quotient curves of genus $g$, we prove that for any choice of the base point and $k \leq g-2$, the Abel-Jacobi image of the Ceresa cycle is non-torsion.  
In particular, these cycles are non-torsion modulo rational equivalence. 
\end{abstract}

\section{Introduction}
Let $X$ be a smooth projective curve of genus $g$ over $\C$ and $\jac(X)$ be its Jacobian. 
Let $\Ch_k(\jac(X))_{\hom}$ be the Chow group of   
homologically trivial algebraic cycles of dimension $k$ on $\jac(X)$ modulo rational equivalence. 
To study this group, we consider the Abel-Jacobi map
 $$\Phi_k \colon \Ch_k(\jac(X))_{\rm hom} \to J_k(\jac(X))\quad (k=1, \ldots, g-1).  $$
Here, $J_k(\jac(X))$ is a complex torus, which is called the Griffiths intermediate Jacobian (see Section 3.1). 
It is well known that $\Phi_{g-1}$ is an isomorphism by the Abel-Jacobi theorem, however, for a general $k$, $\Phi_k$ is neither injective nor surjective.  
Fix a base point $e \in X$ and let $\iota_e$ be the emmbedding defined by 
$$\iota_e \colon X \to \jac(X); \quad x \mapsto [x]-[e]. $$
Put $X_e=\iota_e(X)$. 
We denote $X_e^-$ by the image of $X_e$ under the inversion map.  
Since the inversion map acts trivially on the cohomology groups of even degree, we have 
 $$X_e-X_e^- \in \Ch_1(\jac(X))_{\rm hom}. $$
 Let $W_{k, e}$ be the image of the $k$th symmetric product of $X$ on $\jac(X)$. 
 As in the case of $k=1$, we have 
 $$W_{k,e} - W_{k,e}^- \in \Ch_k(\jac(X))_{\rm hom}. $$
 These cycles are called the Ceresa cycles and for a generic curve $X$, Ceresa \cite{Ceresa} proves that if $1 \leq k \leq g-2$, then $W_{k,e} - W_{k,e}^-$ is non-trivial modulo algebraic equivalence. 
 
 For a positive integer $N$ and integers $a, b \in \{1, \ldots, N-1\}$, 
 let $C_N^{a, b}$ be the smooth projective curve birational to the affine curve
 $$y^N=x^a(1-x)^b. $$
 Let $F_N$ be the Fermat curve of degree $N$. 
Then $C_N^{a, b}$ is a quotient of $F_N$ by a cyclic group $G_N^{a, b}$ (see Section 2).  
Let $g$ be the genus of $C_N^{a, b}$. 
The main theorem of this paper is as follows. 

\begin{thm} \label{main}
Suppose that $N$ has a prime divisor $p>7$ such that 
$p \nmid ab$ and $a^2+ab+b^2 \equiv 0 \pmod{p}$.  
Then $\Phi_k(W_{k, e}-W_{k, e}^-) \in J_k(\jac(C_N^{a, b}))$ is non-torsion for any choice of the base point $e \in C_N^{a, b}$ and $k=1, \ldots, g-2$. 
\end{thm}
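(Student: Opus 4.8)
The strategy is to use the large automorphism group of $C_N^{a,b}$ to split every relevant object into isotypic components and then to detect non-torsion in a single component attached to a triple of characters dictated by the cube-root-of-unity condition $a^2+ab+b^2\equiv 0\pmod p$. First I would reduce to a computation inside one isotypic summand. The cyclic group $G_N^{a,b}$ acts on $C_N^{a,b}$, hence on $\jac(C_N^{a,b})$, on each Chow group, and on each intermediate Jacobian $J_k(\jac(C_N^{a,b}))$, and the Abel-Jacobi map $\Phi_k$ is equivariant; since $W_{k,e}$ and $W_{k,e}^-$ are built from the curve itself, the image $\Phi_k(W_{k,e}-W_{k,e}^-)$ and each of its projections to an isotypic summand $J_{k,\chi}$ are again Abel-Jacobi invariants, so it suffices to exhibit a single character $\chi$ whose component is non-torsion. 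Because $H^{2(g-k)-1}(\jac(C_N^{a,b}),\Q)=\bigwedge^{2(g-k)-1}H^1(C_N^{a,b},\Q)$ and the hard Lefschetz isomorphisms attached to the theta divisor identify the primitive pieces in the odd degrees occurring for $1\le k\le g-2$ (these degrees run from $3$ to $2g-3$), the component I use can be transported, for every such $k$, from one fixed primitive class in $\bigwedge^3 H^1(C_N^{a,b})$ --- exactly the range in which the Ceresa cycle is a nontrivial homologically trivial cycle. Thus the whole theorem collapses to the analysis of a single primitive class, together with the Lefschetz comparison and the compatibility of the Pontryagin-product description of the $W_{k,e}$ with the Abel-Jacobi maps.

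Next I would write this primitive component down explicitly. For a curve with $\mu_N$-action one has $H^1(C_N^{a,b},\C)=\bigoplus_\chi H^1_\chi$, each nonzero $H^1_\chi$ being one-dimensional of pure Hodge type $(1,0)$ or $(0,1)$ according to the fractional parts determined by $(a,b)$ and $\chi$, with an explicit basis of differentials of the form $x^{s-1}(1-x)^{t-1}y^{-r}\,dx$. Using the formula for the Abel-Jacobi image of the Ceresa cycle in terms of the pointed harmonic volume (equivalently, of iterated integrals of pairs of holomorphic one-forms), the pairing of the invariant against $\omega_1\wedge\omega_2\wedge\omega_3$ with $\omega_i\in H^1_{\chi_i}$ vanishes unless $\chi_1\chi_2\chi_3=1$, by $\mu_N$-equivariance. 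This selects the admissible triples, and on such a triple the invariant is a regularized iterated integral whose periods are Beta-values, i.e. quotients of $\Gamma$-values (Jacobi sums after normalisation). Since the three characters I shall use are all nontrivial, the base-point dependence --- which enters only through the augmentation, that is through components involving the trivial character, and through decomposable $\theta$-divisible terms --- does not reach this component; hence the relevant value is independent of $e$ up to torsion, which is what lets me prove the statement for every base point simultaneously.

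The arithmetic hypothesis is exactly what supplies a usable admissible triple. Writing $a^2+ab+b^2=(a-\omega b)(a-\ol{\omega}b)$ with $\omega$ a primitive cube root of unity, the congruence $a^2+ab+b^2\equiv 0\pmod p$ forces $p\equiv 1\pmod 3$ and says that $a/b$ reduces modulo $p$ to a primitive cube root of unity. As $p\mid N$, I can then choose characters $\chi_1,\chi_2,\chi_3$ of $\mu_N$ whose $p$-parts are nontrivial and related by this order-three symmetry, so that $\chi_1\chi_2\chi_3=1$, the summands $H^1_{\chi_i}$ carry Hodge types placing $\omega_1\wedge\omega_2\wedge\omega_3$ in the step of the Hodge filtration that the Griffiths invariant detects, and the associated piece of $J_k$ is a nonzero sub-Hodge structure with complex multiplication. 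The conditions $p>7$ and $p\nmid ab$ guarantee that such a triple exists, that the relevant fractional parts lie in the open ranges making the differentials genuinely holomorphic, and that the period neither degenerates nor satisfies one of the low-level $\Gamma$-value relations.

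Finally I would prove that this component is non-torsion. A point of $J_{k,\chi}=V_\chi/\Lambda_\chi$ is torsion precisely when a lift to $V_\chi$ lies in $\Lambda_\chi\otimes\Q$; for the CM Hodge structure cut out by our triple this amounts to an algebraic relation among the periods of a CM abelian variety, and such a relation is excluded by the transcendence theory of these periods --- W\"ustholz' analytic subgroup theorem, in the form giving the transcendence of the relevant $\Gamma$-quotients (Shiga--Wolfart). The computed Beta-value is therefore not a $\Q$-combination of lattice periods, so the component, and with it $\Phi_k(W_{k,e}-W_{k,e}^-)$, is non-torsion. I expect the genuine obstacle to be precisely this confluence: turning the explicit iterated-integral computation into a clean non-vanishing statement modulo the period lattice, uniformly in $k$ and in $e$, and checking that neither the base-point translation nor the Lefschetz transport introduces a rational relation that could make the invariant torsion. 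The bookkeeping --- choosing the triple and verifying the Hodge types and $\chi_1\chi_2\chi_3=1$ --- is governed by the fractional-part recipe, but the transcendence input is where the real content sits.
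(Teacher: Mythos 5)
Your overall architecture (equivariant decomposition, explicit computation of the harmonic volume on an isotypic triple with $\chi_1\chi_2\chi_3=1$, then a non-vanishing argument modulo the period lattice) is essentially the Harris--Otsubo--Tadokoro approach, and it founders at exactly the step you flag as ``where the real content sits.'' The invariant you would compute is an iterated integral of length two; for Fermat curves and their quotients Otsubo expresses it in terms of special values of generalized hypergeometric series, and torsionness becomes the statement that such a value is a $\Q$-linear combination of products of Beta values spanning the lattice. No available transcendence theorem decides this: W\"ustholz' analytic subgroup theorem and the Shiga--Wolfart results concern periods of \emph{commutative algebraic groups}, whereas $J_1(\jac(X))$ built from $\wedge^3H^1$ (weight $3$, with $h^{3,0}\neq 0$) is a non-algebraic complex torus and the harmonic volume is a period of a genuine \emph{extension} of Hodge structures, not of an abelian variety. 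This is precisely the obstruction the paper points to when it notes that the sufficient conditions of Otsubo and Tadokoro ``are impossible to confirm numerically'': your final step is not a technical confluence to be checked but an open transcendence problem, so the proof does not close.

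The paper avoids transcendence entirely. After reducing to $k=1$ (via Otsubo's explicit shuffle formula rather than hard Lefschetz transport -- your transport step would itself need such a formula to identify the Abel--Jacobi images) and to $N=p$, it uses the Harris--Pulte description of $\Phi_1(X_e-X_e^-)$ as an extension class $\mathbb{E}_e^\infty$, chooses $e$ and $\infty$ among the cusps (which are torsion in the Jacobian), and then pulls the extension back along the correspondence given by the graph $Z$ of the order-$3$ automorphism $\widetilde\beta$ that exists exactly because $a^2+ab+b^2\equiv 0\pmod p$. The Kaenders and Darmon--Rotger--Sols formulas convert the resulting class into an explicit point $P_Z\in\jac(C_p^{a,b})$ supported on the fixed points of $\widetilde\beta$, i.e.\ on the images of $S=(\zeta_6:\zeta_6^{-1}:1)$ and $\overline{S}$; non-torsionness is then the arithmetic (descent-theoretic) theorem of Gross--Rohrlich, which is where $p>7$ enters. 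In short: the detection happens on the Jacobian itself, an abelian variety with a rational point of infinite order, not in a non-algebraic intermediate Jacobian via period transcendence. If you want to salvage your route, you would need to replace the appeal to Shiga--Wolfart by some mechanism that lands the invariant in an algebraic group -- which is what the correspondence $Z$ accomplishes.
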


 \begin{rmk}
 When $N$ does not have a prime divisor $p>7$, there exist some examples that the Abel-Jacobi image of the Ceresa cycle of $C_N^{a, b}$ is torsion.  
For example, $\Phi_1(X_e-X_e^-)$ is torsion for $X=C_{9}^{1, 2}$, $C_{12}^{1, 3}$, $C_{15}^{1, 5}$ and $e=(0, 0)$ (\cite[\S2 Theorem]{B}, \cite[Theorem 3.2]{LS}).
 \end{rmk}

The algebraical non-triviality of the Ceresa cycles of $F_N$ ($N \leq 1000$) and $C_p^{1, b}$ ($p \leq 1000$ is a prime and $b^2+b+1 \equiv 0 \pmod{p}$) 
is proved by Harris \cite{Harris1}, Bloch \cite{Bloch}, Kimura \cite{Kimura}, Tadokoro \cite{Tadokoro1, Tadokoro2, Tadokoro3}, and Otsubo  \cite{Otsubo}. 
 Moreover, Otsubo \cite{Otsubo} and Tadokoro \cite{Tadokoro3} give a sufficient condition for the Ceresa cycles of these to be non-torsion modulo algebraic equivalence,  
  however, it is impossible to confirm numerically these conditions. 
 There are only two explicit examples of non-torsioness modulo algebraic equivalence for $k=1$: $F_4$ by Bloch \cite{Bloch} and $C_7^{1, 2}$ by Kimura \cite{Kimura}.

Let $N$ be a positive integer divisible by a prime  $p > 7$. 
 Eskandari-Murty \cite{EM2, EM} prove that $\Phi_1(F_{N, e} -F_{N, e}^-)$ is non-torsion for any $e \in F_N$, 
in particular, $F_{N, e} -F_{N, e}^-$ is non-torsion modulo rational equivalence. 
 Moreover, they conjecture that the same result holds for $C_p^{1, m}$ with $m \in \{1, \ldots, p-2\}$ and $m \neq 1, (p-1)/2, p-2$ \cite[Section 4, Remark (2)]{EM}. 
Theorem \ref{main} partially but affirmatively answers their conjecture.

We briefly give a sketch of the proof. 
First, we reduce to the case $k=1$ using a method of Otsubo \cite{Otsubo}  (see Proposition \ref{red}). 
The reduction to the case $N=p$ is easy. 
The rest of the proof is parallel to the method of Eskandari-Murty \cite{EM2, EM}. 
First, the Abel-Jacobi image of the Ceresa cycle is described by an extension of mixed Hodge structures by Harris \cite{Harris1} and Pulte \cite{Pulte} (see Section 3.2).  
Secondly, we construct a $1$-cycle $Z$ on $C_p^{a,b} \times C_p^{a,b}$ and evaluate the extension of mixed Hodge structures at $Z$. 
Here, we use the assumptions on $a$ and $b$ so that an automorphism of $F_p$ of order $3$ descends to $C_p^{a, b}$.  
Then it is expressed by a rational point $P_Z \in \jac(C_N^{a, b})$ 
by formulas of Kaenders \cite{Ka} and Darmon-Rotger-Sols \cite{DRS} (see Sections 3.3, 3.4). 
Finally, since $P_Z$ is non-torsion by a result of Gross-Rohrlich \cite{GR} (see Section 2), where we use the assumption $p > 7$, the theorem follows.

\section{Fermat quotient curves}
Let $N >3$ be an integer and for integers $a, b\in \{1, \ldots, N-1\}$,  
let $C^{a, b}_N$ be the smooth projective curve birational to
$$y^N=x^a(1-x)^b. $$
The map 
$$C^{a, b}_N \to \mathbb{P}^1; \quad (x, y) \mapsto x$$
is ramified at $x=0, 1$ and $\infty$. 
Above $0$ (resp. $1$, $\infty$), there are $\gcd(N, a)$ (resp. $\gcd(N, b)$, $\gcd(N, a+b)$) branches and the ramification index is $N/\gcd(N, a)$ (resp. $N/\gcd(N, b)$, $N/\gcd(N, a+b)$). 
Therefore by the Riemann-Hurwitz formula, the genus of $C^{a, b}_N$ is 
$$\dfrac12(N-(\gcd(N, a)+\gcd(N, b)+\gcd(N, a+b)))+1. $$ 
We have an isomorphism 
\begin{align*} 
C^{a, b}_N \cong C^{b, a}_N
\end{align*}
sending $x$ to $1-x$. 
If two other integers $a', b' \in \{1, \ldots, N-1\}$ satisfy the relation
$$(a', b') = (ha, hb) +(Ni, Nj)$$
for some integers $h, i, j$ with $\gcd(N, h)=1$,  
we have 
\begin{align*} 
C^{a, b}_N \cong C^{a', b'}_N; \quad (x, y) \mapsto (x, y^h x^i(1-x)^j). 
\end{align*}
Let $F_N$ be the Fermat curve of degree $N$ defined by 
$$u^N+v^N=w^N. $$
Then there is a morphism 
 $$\pi_N^{a, b} \colon F_N \to C^{a, b}_N; \quad (u : v : w) \mapsto (x, y)=(u^Nw^{-N}, u^av^bw^{-a-b}).$$
Define a finite group by
$$G_N=\Z/N\Z \oplus \Z/N\Z$$
and denote an element $(r, s) \in G_N$ by $g_N^{r, s}$. 
Fix a primitive $N$-th root of unity $\zeta_N$ and     
let $G_N$ act on $F_N$ by
$$g_N^{r, s}(u : v : w)=(\zeta_N^r u : \zeta_N^s v : w). $$
  Let $G_N^{a, b}$ be a subgroup of $G_N$ defined by 
 $$G_N^{a, b}= \{g_N^{r, s} \in G_N \mid ar+bs=0\}.  $$
If $\gcd(N, a, b)=1$, $F_N$ is 
generically Galois over $C^{a, b}_N$ and 
$$\operatorname{Gal} (F_N/ C^{a, b}_N)=G_N^{a, b} =\langle g_N^{b, -a} \rangle \simeq \Z/N\Z. $$   
There is an automorphism $\alpha$ of $F_N$ of order $2$ defined by
\begin{align*} 
\alpha((u : v : w)) = (v : u : w). 
\end{align*}
When $N$ is odd, there is an automorphism $\beta$ of $F_N$ of order $3$ defined by 
\begin{align*} 
\beta((u : v : w)) = (-v : w : u). 
\end{align*}

\begin{lem}[cf. {\cite[Section 3.1]{IS}}] \label{alpha} 
Suppose that $\gcd(N, a, b)=1$. Then: 
\begin{enumerate}
\item
$\alpha$ descends to $C^{a, b}_N$ if and only if $a^2 \equiv b^2  \pmod{N}$.
\item Suppose that $N$ is odd.   
Then $\beta$ descends to $C^{a, b}_N$ if and only if $a^2+ab+b^2 \equiv 0 \pmod{N}$. 
We denote $\widetilde{\beta}$ by this automorphism.  
\end{enumerate}
\end{lem}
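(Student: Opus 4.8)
The plan is to translate "descends to $C_N^{a, b}$" into a purely group-theoretic condition and then verify it by computing a conjugation action on $G_N$. Since $\gcd(N, a, b)=1$, the curve $C_N^{a, b}$ is the quotient $F_N/G_N^{a, b}$ with $G_N^{a, b}=\operatorname{Gal}(F_N/C_N^{a, b})=\langle g_N^{b, -a}\rangle$, a cyclic group of order $N$. An automorphism $\sigma$ of $F_N$ descends to an automorphism of the quotient if and only if it carries $G_N^{a, b}$-orbits to $G_N^{a, b}$-orbits; equivalently, applying Galois theory to $\C(F_N)/\C(C_N^{a, b})=\C(F_N)^{G_N^{a, b}}$, if and only if $\sigma$ normalizes $G_N^{a, b}$ inside $\operatorname{Aut}(F_N)$, i.e. $\sigma G_N^{a, b}\sigma^{-1}=G_N^{a, b}$. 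So first I would record this equivalence and apply it to $\sigma=\alpha$ and $\sigma=\beta$.

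Next I would compute the conjugation action of $\alpha$ and $\beta$ on the elements $g_N^{r, s}$. This is a short calculation in $\P^2$: one chases a point through $\sigma^{-1}$, then $g_N^{r, s}$, then $\sigma$, and rescales the projective coordinates so that the last coordinate is normalized back to $w$. The outcomes are
\begin{align*}
\alpha\, g_N^{r, s}\, \alpha^{-1}=g_N^{s, r}, \qquad \beta\, g_N^{r, s}\, \beta^{-1}=g_N^{s-r, -r},
\end{align*}
which are automorphisms of $G_N$ of the expected orders $2$ and $3$.

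Finally, because $G_N^{a, b}=\langle g_N^{b, -a}\rangle$ is cyclic of order $N$ and conjugation preserves order, $\sigma G_N^{a, b}\sigma^{-1}=\langle\sigma g_N^{b, -a}\sigma^{-1}\rangle$ is again cyclic of order $N$; hence it equals $G_N^{a, b}$ if and only if the single element $\sigma g_N^{b, -a}\sigma^{-1}$ already lies in $G_N^{a, b}$. From the formulas above, $\alpha g_N^{b, -a}\alpha^{-1}=g_N^{-a, b}$ and $\beta g_N^{b, -a}\beta^{-1}=g_N^{-a-b, -b}$, so membership in $G_N^{a, b}=\{g_N^{r, s}\mid ar+bs\equiv 0\}$ amounts to $a(-a)+b\cdot b\equiv 0$, that is $a^2\equiv b^2\pmod N$, in the first case, and to $a(-a-b)+b(-b)\equiv 0$, that is $a^2+ab+b^2\equiv 0\pmod N$, in the second; these are exactly (i) and (ii). The only delicate points are the justification of the descent-equals-normalizer criterion and the reduction from "$\sigma$ normalizes the subgroup" to "$\sigma$ sends one generator into it" — which is where the hypothesis $\gcd(N, a, b)=1$, guaranteeing $|G_N^{a, b}|=N$, enters — together with the sign bookkeeping in the projective rescaling for $\beta$, where the minus sign in its definition must be tracked consistently.
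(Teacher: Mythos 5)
Your proof is correct and follows essentially the same route as the paper: both reduce descent to the condition that conjugation by $\beta$ (resp.\ $\alpha$) carries the generator $g_N^{b,-a}$ of $\operatorname{Gal}(F_N/C_N^{a,b})$ back into $G_N^{a,b}$, verified by the same explicit computation in projective coordinates. The only difference is cosmetic: you test membership via the defining relation $ar+bs\equiv 0$, which gives $a^2+ab+b^2\equiv 0$ directly, whereas the paper parametrizes $G_N^{a,b}$ as powers $g_N^{bi,-ai}$ and must solve for the exponent $i$ (using $\gcd(N,a)=1$) in the converse direction.
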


\begin{proof}
We only prove (ii) since we use the morphism $\widetilde{\beta}$ to prove Theorem \ref{main} and (i) is similarly proved. 
The automorphism $\beta$ descends to $\widetilde{\beta}$ if and only if 
$$\pi_N^{a, b}( \beta (g_N^{b, -a}  (u : v : w)))=\pi_N^{a, b} (\beta(u : v : w)), $$
i.e.  there exists an integer $i$ such that 
$$\left(-\zeta_N^{-a} v :  w : \zeta_N^{b} u \right)=\left(-\zeta_N^{bi}v :  \zeta_N^{-ai}w : u\right). $$
This is equivalent to 
\begin{align} \label{equiv}
a+b \equiv -bi \quad {\rm and} \quad b \equiv ai \pmod{N}. 
\end{align}
First, \eqref{equiv} implies $a^2+ab+b^2 \equiv 0 \pmod{N}$. 
On the other hand, if $a^2+ab+b^2 \equiv 0 \pmod{N}$, then we have $\gcd(N,a)=\gcd(N, b)=1$ by the assumption $\gcd(N, a, b)=1$. 
Therefore, there is an integer $i$ such that $ai \equiv b \pmod{N}$, which satisfies \eqref{equiv}. 
\end{proof} 
\begin{rmk} \label{rmk1} \ 

\begin{enumerate}
\item If $N$ is a prime, the condition $a^2+ab+b^2 \equiv 0 \pmod{N}$ implies that $N \equiv 1 \pmod{3}$. 
\item When $N=a^2+ab+b^2$, the curve $C^{a, b}_N$ is isomorphic to the Hurwitz curve (\cite[Lemma 3.8]{IS}) which is the smooth projective curve birational to 
 $$X^{b}Y^{a+b}+Y^{b}Z^{a+b}+Z^{b}X^{a+b}=0.$$
\end{enumerate}
\end{rmk}
When $\gcd(N, 6)=1$, 
the automorphism $\beta$ of $F_N$ has two fixed points
$$S=(\zeta_6 : \zeta_6^{-1} :1), \quad \overline{S}=(\zeta_6^{-1} : \zeta_6 : 1). $$
\begin{lem} \label{fix}
Suppose that $\gcd(N, a, b)=\gcd(N, 6)=1$ and $a^2+ab+b^2 \equiv 0 \pmod{N}$.   
Then the fixed points of the automorphism $\widetilde{\beta}$ of $C^{a, b}_N$ are $\pi_N^{a, b}(S)$ and $\pi_N^{a, b}(\overline{S})$, which are distinct. 
\end{lem}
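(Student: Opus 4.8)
The plan is to understand $\widetilde{\beta}$ through the covering $\pi_N^{a,b}\colon F_N\to C_N^{a,b}$ together with the degree-$N$ map $x\colon C_N^{a,b}\to\P^1$. Since $x=u^Nw^{-N}$ is $G_N$-invariant, the action of $\beta$ on the $x$-line descends, and a direct substitution (using that $N$ is odd) shows that $\widetilde{\beta}$ acts on $x$ by the order-$3$ M\"obius transformation $x\mapsto(x-1)/x$. Its fixed points on $\P^1$ are the roots of $x^2-x+1$, namely $\zeta_6$ and $\zeta_6^{-1}$. Consequently every fixed point of $\widetilde{\beta}$ lies over $x=\zeta_6$ or $x=\zeta_6^{-1}$; as these differ from the branch values $0,1,\infty$, all points of $F_N$ above them are unramified for $\pi_N^{a,b}$, so $G_N^{a,b}$ acts freely there.

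Next I would check that $\pi_N^{a,b}(S)$ and $\pi_N^{a,b}(\overline S)$ are fixed. Because $\beta$ descends we have $\pi_N^{a,b}\circ\beta=\widetilde{\beta}\circ\pi_N^{a,b}$, so the equalities $\beta(S)=S$ and $\beta(\overline S)=\overline S$ (which follow from $\zeta_6^3=-1$) immediately give $\widetilde{\beta}(\pi_N^{a,b}(S))=\pi_N^{a,b}(S)$ and likewise for $\overline S$.

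The heart of the argument is to rule out any other fixed points. A point $Q\in C_N^{a,b}$ is fixed by $\widetilde{\beta}$ if and only if for one (hence any) preimage $P$ one has $\beta(P)\in G_N^{a,b}\cdot P$; writing $\gamma=g_N^{b,-a}$ for the generator, this means $P\in\operatorname{Fix}(\gamma^{-j}\beta)$ for some $j$. Each $\gamma^{-j}\beta$ is again of order $3$ (since $\beta\gamma\beta^{-1}=\gamma^{c}$ with $c\equiv a^{-1}b$ and $c^2+c+1\equiv0\pmod N$). Solving $\gamma^{-j}\beta(u:v:w)=(u:v:w)$ projectively reduces, with scaling factor $\mu$, to $\mu^3=-\zeta_N^{\,j(a-b)}$ together with the Fermat equation, the latter forcing $\mu^N\in\{\zeta_6,\zeta_6^{-1}\}$. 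Since $\mu^{3N}=-1$ and $\gcd(N,3)=1$, the three cube roots of $-\zeta_N^{\,j(a-b)}$ have $N$-th powers equal to the three cube roots of $-1$, exactly two of which lie in $\{\zeta_6,\zeta_6^{-1}\}$; hence each $\gamma^{-j}\beta$ has precisely two fixed points, one over $\zeta_6$ and one over $\zeta_6^{-1}$. These sets $\operatorname{Fix}(\gamma^{-j}\beta)$ are pairwise disjoint because $G_N^{a,b}$ acts freely over $\zeta_6^{\pm1}$, and their union $U$ is $G_N^{a,b}$-stable; thus $|U|=2N$ and $U$ consists of exactly two free $G_N^{a,b}$-orbits. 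As the fixed points of $\widetilde{\beta}$ are in bijection with $U/G_N^{a,b}$ via $P\mapsto\pi_N^{a,b}(P)$, the automorphism $\widetilde{\beta}$ has exactly two fixed points.

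Finally I would establish distinctness, for which it suffices to show $\overline S\notin G_N^{a,b}\cdot S$. If $\gamma^j S=\overline S$, then comparing first coordinates yields $\zeta_N^{jb}=\zeta_6^{-2}$, i.e.\ an $N$-th root of unity equal to a primitive cube root of unity, which is impossible because $3\nmid N$. Hence $\pi_N^{a,b}(S)\neq\pi_N^{a,b}(\overline S)$, and combined with the count these two points are exactly the fixed locus of $\widetilde{\beta}$. The main obstacle is the counting step: ruling out spurious fixed points over $\zeta_6^{\pm1}$, where the coprimality $\gcd(N,3)=1$ is precisely what forces the cube-root bookkeeping to yield two fixed points rather than three.
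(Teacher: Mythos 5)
Your proof is correct, but it takes a more explicit route than the paper. The paper's proof is very short: it takes a preimage $Q\in F_N$ of a fixed point, writes $\beta(Q)=g_N^{bi,-ai}Q$, and then argues from $\mathrm{ord}(\beta)=3$ and $\gcd(N,3)=1$ that $Q$ itself must be fixed by $\beta$, hence equals $S$ or $\overline{S}$; distinctness is checked exactly as you do, by showing $\overline{S}\notin G_N^{a,b}\cdot S$ using $\gcd(N,6)=1$. You instead classify the full preimage of the fixed locus as the union $U=\bigcup_j\operatorname{Fix}(\gamma^{-j}\beta)$, compute each $\operatorname{Fix}(\gamma^{-j}\beta)$ explicitly via the scaling equation $\mu^3=-\zeta_N^{j(a-b)}$, $\mu^N\in\{\zeta_6,\zeta_6^{-1}\}$, and show $U$ consists of exactly two free $G_N^{a,b}$-orbits. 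This is longer, but it buys something real: it is precisely \emph{not} true that every preimage of a fixed point of $\widetilde{\beta}$ is fixed by $\beta$ itself (only one point in each $G_N^{a,b}$-orbit of $U$ is fixed by the untwisted $\beta$), so the paper's step ``$\beta^N(Q)=\beta^{\pm1}(Q)=Q$'' needs to be read as applying to a suitably chosen preimage, whereas your twist-by-twist count handles all preimages uniformly and in addition yields the exact count of two fixed points. Your preliminary observation that $\beta$ acts on the $x$-line by $x\mapsto(x-1)/x$ with fixed points $\zeta_6^{\pm1}$ is not in the paper but is consistent with it and localizes the problem nicely. All the computations I checked ($\beta\gamma\beta^{-1}=\gamma^{c}$ with $c^2+c+1\equiv0$, the two admissible cube roots $\mu$, the disjointness via freeness of the $G_N$-action off the coordinate hyperplanes, and the distinctness step) are sound.
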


\begin{proof}
Let $\pi_N^{a, b}(Q) \in C^{a, b}_N$ ($Q \in F_N$) be a fixed point of $\widetilde{\beta}$, i.e.   
there exists an integer $i$ such that 
\begin{align*} 
\beta(Q)= g_N^{bi, -ai}  Q. 
\end{align*}
Since the order of $\beta$ is $3$, we have 
$$\beta^N(Q)=\beta^{\pm 1}(Q)=Q, $$
which implies that $Q=S$ or $\overline{S}$. 
We are to show that $\pi_N^{a, b}(S) \neq \pi_N^{a, b}(\overline{S})$. 
Suppose that $\pi_N^{a, b}(S) = \pi_N^{a, b}(\overline{S})$, i.e.  
there exists an integer $i$ such that 
$$\zeta_6 = \zeta_N^{bi}\zeta_6^{-1}, \quad \zeta_6^{-1}= \zeta_N^{-ai} \zeta_6. $$
Therefore we have $\zeta_6^{2N}=1$, which contradicts the assumption $\gcd(N, 6)=1$. 
\end{proof}

Put $P_0=(0 : 1: 1) \in F_N$ and let $F_N \to \jac(F_N)$  be a map defined by 
$Q \mapsto[Q]-[P_0]$. 
Similarly, we define a map $C^{a, b}_N \to \jac(C^{a, b}_N)$ by sending $Q'$ to $[Q'] - [\pi_N^{a, b}(P_0)]$. 
Then we have a commutative diagram 
\[
   \xymatrix{
    F_N \ar[r] \ar[d]_{\pi_N^{a, b}} & \jac(F_N) \ar[d]^{(\pi_N^{a, b})_*} \\
   C^{a, b}_N   \ar[r]  & \jac(C^{a, b}_N). 
} \]
The following result of Gross and Rohrlich is one of the key ingredients to the proof of Theorem \ref{main}. 
\begin{thm}[{\cite[Theorem 2.1]{GR}}] \label{GR}
Let $N$ be an integer such that $\gcd(N, 6)=1$ and divisible by a prime $p>7$. 
If $a-b, a+2b, 2a+b  \not \equiv 0 \pmod{p}$, then the point $(\pi_N^{a, b})_*([S]+[\overline{S}] -2[P_0])$ on $\jac(C^{a, b}_N)$ is non-torsion. 
\end{thm}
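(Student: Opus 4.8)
The plan is to detect non-torsionness through the complex multiplication on $\jac(C^{a, b}_N)$ combined with reduction modulo primes, rather than through a transcendence statement about archimedean periods. Write $P=(\pi_N^{a, b})_*([S]+[\overline{S}]-2[P_0])$. The points $S,\overline{S},P_0$ and the curve $C^{a, b}_N$ are all defined over the cyclotomic field $L=\Q(\zeta_{6N})$, and $C^{a, b}_N$ has good reduction outside $N$, so $P\in\jac(C^{a, b}_N)(L)$. For a prime $\mathfrak{l}$ of $L$ of good reduction over a rational prime $\ell$, the reduction map is injective on prime-to-$\ell$ torsion; hence if $P$ were torsion of order $n$, then $\ord(\overline{P}_{\mathfrak{l}})=n$ for all but finitely many $\mathfrak{l}$. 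It therefore suffices to exhibit primes $\mathfrak{l}$ along which $\ord(\overline{P}_{\mathfrak{l}})$ is unbounded.

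To make these orders computable I would first decompose equivariantly. The group $G_N$ acts on $\jac(F_N)$, and $\jac(C^{a, b}_N)$ is, up to isogeny, the part cut out by the characters of $G_N$ that are trivial on $G_N^{a, b}$. Over $\Q(\zeta_N)$ this part splits into abelian varieties $A_\chi$ with complex multiplication by a cyclotomic order $\mathcal{O}$, indexed by such characters $\chi$, and by Weil's theorem Frobenius at $\mathfrak{l}$ acts on $A_\chi$ through a Jacobi sum $J_\chi(\mathfrak{l})$. It suffices to treat a single $\chi$: if the image $P_\chi$ of $P$ in $A_\chi$ generates $A_\chi$ as an $\mathcal{O}$-module, then $\ord(\overline{P}_\chi)$ equals the exponent of $A_\chi(\mathbb{F}_{\mathfrak{l}})$, which is governed by the ideal $(J_\chi(\mathfrak{l})-1)$.

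The heart of the argument is to evaluate $[S]+[\overline{S}]-2[P_0]$ in such a component. Since $S$ and $\overline{S}$ are exactly the fixed points of $\widetilde{\beta}$ (Lemma \ref{fix}) and $\widetilde{\beta}$ has order $3$, I would integrate the Fermat eigendifferentials $\omega_\chi$ along paths from $P_0$ to $S$ and to $\overline{S}$ and use $\beta$-equivariance to reduce the resulting periods to explicit Beta- and Gamma-type values. The congruence conditions $a-b,\,a+2b,\,2a+b\not\equiv 0\pmod p$ are precisely what guarantees, for the characters $\chi$ attached to this component, that the class is nonzero and that $J_\chi(\mathfrak{l})$ is not a root of unity. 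The assumption $p>7$ then ensures that $\Q(\zeta_N)$ is large enough for the norm of $J_\chi(\mathfrak{l})-1$ to grow along a suitable family of split primes, so that $\ord(\overline{P}_{\mathfrak{l}})\to\infty$; this is also what rules out the sporadic torsion behaviour observed for small $N$.

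The decisive difficulty, I expect, is the last step made unconditional: showing that $\overline{P}_\chi$ really is an $\mathcal{O}$-module generator (equivalently, that the subgroup it generates has bounded index in $A_\chi(\mathbb{F}_{\mathfrak{l}})$), uniformly in $\mathfrak{l}$. This requires expressing $\overline{P}_\chi$ through Gauss and Jacobi sums explicitly enough to see that it is not annihilated by any fixed integer---bypassing the transcendence of the archimedean periods---and checking that none of the three excluded linear forms in $a$ and $b$ degenerates the relevant Jacobi sums. This uniform control of the Jacobi-sum data under exactly the stated hypotheses is where the real work lies.
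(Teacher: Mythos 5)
First, a remark on the comparison itself: the paper does not prove this statement. It is quoted directly from Gross--Rohrlich \cite[Theorem 2.1]{GR}, so there is no internal proof to measure your attempt against; I can only judge your proposal on its own terms and against the cited source.

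Judged that way, your proposal has a genuine gap, and it is exactly the step you flag in your last paragraph. The reduction-mod-$\mathfrak{l}$ framework, the isogeny decomposition into CM factors $A_\chi$, and Weil's description of Frobenius via Jacobi sums are all correct, but they control only the \emph{group} $A_\chi(\F_{\mathfrak{l}})$ --- its order and its structure as a module over $\mathcal{O}/(J_\chi(\mathfrak{l})-1)$ --- and say nothing about where the particular point $\overline{P}_{\mathfrak{l}}$ sits inside it. If $P$ were torsion of order $n$, then $\overline{P}_{\mathfrak{l}}$ would be killed by $n$ for \emph{every} $\mathfrak{l}$; so the assertion that $\overline{P}_\chi$ generates $A_\chi(\F_{\mathfrak{l}})$ up to bounded index, uniformly in $\mathfrak{l}$, is not a deferred technical lemma but is logically equivalent to (a strengthening of) the theorem itself. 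No mechanism is proposed that would bound $\ord(\overline{P}_{\mathfrak{l}})$ from below, and the Jacobi-sum data cannot supply one. Your reading of the hypotheses is also off: $|J_\chi(\mathfrak{l})|=\sqrt{q}$ always, so these Jacobi sums are never roots of unity and the norm of $J_\chi(\mathfrak{l})-1$ grows with $\mathfrak{l}$ for any $p$; neither phenomenon is what the congruences $a-b,\,a+2b,\,2a+b\not\equiv 0$ or the bound $p>7$ are for.

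For the record, the actual proof in \cite{GR} is archimedean --- the very route you set out to avoid. One computes the Abel--Jacobi image of $[S]+[\overline{S}]-2[P_0]$ against the eigendifferentials $\omega_{r,s}$: the period lattice of $\omega_{r,s}$ on the quotient is an explicit cyclotomic multiple of the Beta value $B(r/N,s/N)$ (Rohrlich's computation of the Fermat periods), and the integrals from $P_0$ to the fixed points $S,\overline{S}$ of $\beta$ are evaluated using the order-$3$ symmetry. If the point were torsion, an explicit ratio of $\Gamma$-values (equivalently of sines, via the reflection formula) would have to satisfy a rationality constraint; the conditions on $a-b$, $a+2b$, $2a+b$ keep the relevant eigendifferentials nondegenerate, and $p>7$ is precisely what makes the final constraint contradictory --- for $p=7$ the point can genuinely be torsion, which is why the paper routes that case through Kimura's work instead. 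Your idea of integrating eigendifferentials and exploiting $\beta$-equivariance at $S,\overline{S}$ is in fact the core of that argument; grafting it onto the reduction-mod-$\mathfrak{l}$ scaffolding does not remove the need for the archimedean irrationality input, so the proposal as written does not close.
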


\section{Algebraic cycles and Hodge theory of quadratic iterated integrals}
\subsection{Extension of mixed Hodge structures}
Let $R=\Z$ or $\Q$. 
An $R$-mixed Hodge structure $H$ is an $R$-module $H_R$ of finite rank equipped with an increasing weight filtration $W_{\bullet}$ on $H_{\Q}:=H_R \otimes_{R} \Q$
and a decreasing Hodge filtration $F^{\bullet}$ on $H_{\C}:=H_R \otimes_{R} \C$ such that for each $k$, 
${\rm Gr}_k^W(H_{\Q})$ with the induced filtration $F^{\bullet}$ is a pure $\Q$-Hodge structure of weight $k$. 
Let $R(n)$ be the Tate object of pure weight $-2n$ and  put $H(n)=H\otimes_{R} R(n)$. 
Let $H^{\vee}$ be the dual $R$-mixed Hodge structure of $H$.  

Let MHS$(R)$ be the category of $R$-mixed Hodge structures. 
For $R$-mixed Hodge structures $A$, $B$, 
let ${\rm Ext}_{{\rm MHS}(R)}(A, B)$ denote the set of equivalence classes of extensions of $R$-mixed Hodge structures, i.e. exact sequences 
$$0 \to B \to E \to A \to 0$$
of $R$-mixed Hodge structures up to natural equivalence relation.  
There is a natural operation called the Baer sum which makes ${\rm Ext}_{{\rm MHS}(R)}(A, B)$ an abelian group. 
If $X$ is a smooth projective variety over $\C$, the cohomology group $H^n(X, \Z)$ underlies a pure $\Z$-Hodge structure of weight $n$, which we denote by $H^n(X)$.  

For a pure $\Z$-Hodge structure $H$ of weight $-1$, 
the intermediate Jacobian is defined by 
$$JH=H_{\C}/(F^0H_{\C} +H_{\Z}), $$
which is a complex torus. 
We have Carlson's isomorphism \cite{Carlson} 
$$JH \cong {\rm Ext}_{{\rm MHS}(\Z)} (H^{\vee}, \Z).$$ 
For a smooth projective variety $X$ over $\C$, $H_{2k+1}(X)(-k)$ is a pure $\Z$-Hodge structure of weight $-1$, and
$$J_k(X):=JH_{2k+1}(X)(-k) \cong (F^{k+1}H^{2k+1}(X, \C))^{\vee} /H_{2k+1}(X, \Z) $$
is the $k$-th intermediate Jacobian of Griffiths.  
The Carlson isomorphism is written as 
$$J_k(X) \cong {\rm Ext}_{{\rm MHS}(\Z)}(H^{2k+1}(X)(k), \Z). $$

Let $\Ch_k(X)$ be the Chow group of $k$-dimensional algebraic cycles on $X$ modulo rational equivalence, and $\Ch_k(X)_{\rm hom}$ be the subgroup of homologically trivial cycles. 
Then 
we have the Abel-Jacobi map 
$$\Phi_k \colon \Ch_k(X)_{\rm hom} \to J_k(X); \quad Z  \mapsto \left(\eta \mapsto \int_{\Gamma} \eta \right)$$
for any $\eta \in F^{k+1}H^{2k+1}(X, \C)$, where $\Gamma$ is a topological $(2k+1)$-chain such that $\partial \Gamma = Z$. 

From now on, let $X$ be a smooth projective curve of genus $g \geq 3$ over $\C$. 
Let  
$$\langle \ , \ \rangle \colon H^1(X) \otimes H^1(X) \to H^2(X)= \Z(-1)$$
be the cup product $\varphi \otimes \varphi' \mapsto \int_X \varphi \wedge \varphi'$. 
Choosing a base point $e \in X$, $X$ is embedded into $\jac(X)$ sending $e$ to zero. 
It induces isomorphisms
$$H_1(X) \xrightarrow{\simeq} H_1(\jac(X)), \quad H^1(\jac(X)) \xrightarrow{\simeq} H^1(X),$$
which do not depend on the choice of $e$. We identify these and denote them by $H_1$ and $H^1$, respectively.  
Recall that the cup product induces an isomorphism
$$\wedge^nH^1 \xrightarrow{\simeq} H^n(\jac(X)). $$

For $e \in X$, let $\iota_e \colon X \to \jac(X)$ be a map defined by $P \mapsto [P]-[e]$. 
Let $X^k$ (resp. $\jac(X)^k$) be the $k$-fold product of $X$ (resp. $\jac(X)$) and 
$\mu \colon \jac(X)^k \to \jac(X)$ be the addition. 
%
We put 
$$W_{k, e}=(\mu \circ (\iota_e)^k)(X^k) \quad (1 \leq k \leq g). $$
Then $W_{k, e}$ defines an algebraic $k$-cycle on $\jac(X)$ and $W_{k, e}-W_{k, e}^-$ defines an element of $\Ch_k(\jac(X))_{\rm hom}$.


 \begin{prop} \label{red} 
 If $\Phi_1(X_e -X_e^-)$ is non-torsion, then $\Phi_k(W_{k, e}-W_{k, e}^-)$ is non-torsion for any $k=2, \ldots, g-2$.  
 \end{prop}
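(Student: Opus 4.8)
The plan is to transport the non-torsion property from $J_1$ to $J_k$ along the Lefschetz operator, which is realized geometrically by intersection with the theta divisor. Let $\theta \in H^2(\jac(X))$ denote the class of the theta divisor $\Theta$. Since $\theta$ is a Hodge class of type $(1,1)$, for each $j \ge 0$ the cup product with $\theta^{j}$ is a morphism of pure Hodge structures of weight $1$,
$$L^{j} \colon H^{3}(\jac(X))(1) \lra H^{2j+3}(\jac(X))(j+1). $$
Taking $j = k-1$ and applying ${\rm Ext}_{{\rm MHS}(\Z)}(-, \Z)$ together with the Carlson isomorphism, we obtain a homomorphism
$$\psi_k \colon J_k(\jac(X)) \lra J_1(\jac(X)). $$
As $\theta = [\Theta]$ is algebraic, $L^{k-1}$ is induced by the correspondence ``intersect with $\Theta^{k-1}$'', so the functoriality of the Abel--Jacobi map under algebraic correspondences furnishes a commutative square whose bottom and top arrows are $\psi_k$ and the intersection map $(-)\cdot \Theta^{k-1}\colon \Ch_k(\jac(X))_{\rm hom} \to \Ch_1(\jac(X))_{\rm hom}$, and whose vertical arrows are $\Phi_k$ and $\Phi_1$.

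The core of the argument is to prove the identity
$$\psi_k\bigl(\Phi_k(W_{k,e}-W_{k,e}^-)\bigr) = c\cdot \Phi_1(X_e - X_e^-) \qquad \text{with } c \ne 0.$$
For the expected scalar I would start from the Poincar\'e formula $[W_{k,e}] = \theta^{g-k}/(g-k)!$, which gives $[W_{k,e}]\cdot \theta^{k-1} = \tfrac{(g-1)!}{(g-k)!}\,[W_{1,e}]$ in cohomology and motivates $c = (g-1)(g-2)\cdots (g-k+1)$. The substance, however, sits not on these (vanishing) homology classes but on the extension classes: I would use that the Ceresa class is primitive, so that $\Phi_1(X_e - X_e^-)$ lies in the summand ${\rm Ext}_{{\rm MHS}(\Z)}(P^3(1), \Z)$ of $J_1$, where $P^3 = \wedge^3_{\rm prim}H^1$ is the primitive part of $H^3(\jac(X)) = \wedge^3 H^1$, and correspondingly that $\Phi_k(W_{k,e}-W_{k,e}^-)$ is concentrated on the summand $L^{k-1}P^3(k) \subset H^{2k+1}(\jac(X))(k)$.

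The hypothesis $k \le g-2$ enters precisely here, and verifying this is the step I expect to be the main obstacle. By the hard Lefschetz theorem the restriction $L^{k-1}\vert_{P^3}\colon P^3 \to H^{2k+1}(\jac(X))$ is injective exactly when $k-1 \le g-3$, that is, for $k \le g-2$; in this range $L^{k-1}$ identifies $P^3(1)$ with the sub-Hodge structure $L^{k-1}P^3(k)$, and the induced map ${\rm Ext}_{{\rm MHS}(\Z)}(L^{k-1}P^3(k), \Z) \to {\rm Ext}_{{\rm MHS}(\Z)}(P^3(1), \Z)$ is an isomorphism. The delicate points are to confirm that the Ceresa extension class really is supported on this primitive summand (its component in the complementary summand $\theta\wedge H^1 \cong H^1(-1)$, whose ${\rm Ext}$ recovers the abelian variety $\jac(X)$, must be trivial) and that the intersection $(W_{k,e}-W_{k,e}^-)\cdot \Theta^{k-1}$ agrees, modulo cycles with torsion Abel--Jacobi image, with $c(X_e - X_e^-)$, so that the scalar $c$ survives. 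For $k = g-1$ the map $L^{k-1}\vert_{P^3}$ ceases to be injective, which is consistent with the stated range.

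Granting the identity, the conclusion is immediate. The map $\psi_k$ is a homomorphism of abelian groups, hence preserves torsion. If $\Phi_k(W_{k,e}-W_{k,e}^-)$ were torsion, then $c\cdot \Phi_1(X_e-X_e^-)$ would be torsion with $c\ne 0$, forcing $\Phi_1(X_e-X_e^-)$ to be torsion and contradicting the assumption. Therefore $\Phi_k(W_{k,e}-W_{k,e}^-)$ is non-torsion for every $k=2,\dots,g-2$.
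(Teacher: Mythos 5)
Your strategy (transfer from $J_k$ to $J_1$ via the Lefschetz operator $L^{k-1}=\cup\,\theta^{k-1}$) is a genuinely different route from the paper's, but as written it has a gap exactly at the step you yourself flag as ``the main obstacle'': the identity $\psi_k\bigl(\Phi_k(W_{k,e}-W_{k,e}^-)\bigr)=c\cdot\Phi_1(X_e-X_e^-)$ (modulo torsion) is asserted, motivated only by the Poincar\'e formula on cohomology classes, and never proved. Since both sides of the Poincar\'e formula are statements about the (vanishing) classes $[W_{k,e}]-[W_{k,e}^-]$, it gives no information about the Abel--Jacobi images; the entire content of the proposition is concentrated in this unproved identity. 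The paper supplies precisely this missing ingredient by quoting Otsubo's explicit shuffle formula (\cite[Proposition 3.7]{Otsubo}), which expresses $\Phi_k(W_{k,e}-W_{k,e}^-)$ evaluated on a decomposable class $\varphi_1\wedge\cdots\wedge\varphi_{2k+1}$ as a sum over shuffles of $\Phi_1(X_e-X_e^-)(\varphi_{\sigma(1)}\wedge\varphi_{\sigma(2)}\wedge\varphi_{\sigma(3)})\prod_i\langle\varphi_{\sigma(2i+2)},\varphi_{\sigma(2i+3)}\rangle$, and then chooses $\varphi_4,\dots,\varphi_{2k+1}$ to be the symplectic pairs $e_{i+3},f_{i+3}$ on indices disjoint from those carrying the non-torsion value; the disjointness forces all but the identity shuffle to vanish and is possible precisely because $k\le g-2$ (so $k+2\le g$). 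Your hypothesis $k\le g-2$ enters through hard Lefschetz instead, which is a reasonable parallel, but without a substitute for Otsubo's formula (or an honest computation of $\Phi_k(W_{k,e}-W_{k,e}^-)(\varphi\wedge\theta^{k-1})$, which would in effect reprove it) the argument does not close.

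A second, more localized error: the claim that the Ceresa class is primitive is false in general. By the Harris--Pulte results quoted in the paper (Proposition 3.6(ii)), the component of $\Phi_1(X_e-X_e^-)$ on the summand ${\rm Ext}_{{\rm MHS}}(\sigma(H^1\wedge\langle\overline{\xi}_\Delta\rangle),\Z(-1))\cong\jac(X)$ is essentially $(2g-2)[e]-K$, which is non-torsion for a general base point $e$. Proposition \ref{red} is applied in the paper for an arbitrary base point, so you cannot discard this component; worse, the hypothesis ``$\Phi_1(X_e-X_e^-)$ is non-torsion'' could be witnessed entirely by this non-primitive component, in which case an argument confined to the primitive summand $(\wedge^3H^1)_{\rm prim}$ concludes nothing. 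You would need either to prove the transfer identity on all of $\wedge^3H^1$ (where $L^{k-1}$ is still injective for $k\le g-2$, so this is not hopeless), or to handle the decomposable components separately.
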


 \begin{proof}
 Let $S=\{e_i, f_i \mid 1 \leq i \leq g\}$ be a symplectic basis of $H^1_{\Z}$, i.e. 
$\langle e_i, e_j \rangle = \langle f_i, f_j \rangle=0$, $\langle e_i, f_j \rangle = \delta_{ij}$. 
Under the identification 
$$J_k(\jac(X)) \cong {\rm Hom}(\wedge^{2k+1} H^1_{\Z}, \R/\Z), $$
if $\Phi_1(X_e - X_e^-)$ is non-torsion, there exists elements $\varphi_1$, $\varphi_2$, $\varphi_3 \in S$   
such that 
$$\Phi_1(X_e - X_e^-)( \varphi_1 \wedge \varphi_2 \wedge \varphi_3)$$
 is non-torsion. 
By renumbering, we may assume that $\varphi_1$, $\varphi_2$, $\varphi_3 \in \{e_i, f_i \mid 1 \leq i \leq 3\}$.  
For $i=1, \ldots,  k-1$, we put 
$$\varphi_{2i+2}=e_{i+3}, \quad \varphi_{2i+3}=f_{i+3}.$$
Note that $i+3 \leq g$ by the assumption.   
Put $\varphi= \varphi_1 \wedge \cdots \wedge \varphi_{2k+1}$. 
 Then, by \cite[ Proposition 3.7]{Otsubo}, we have
 \begin{align*}
 &k! \cdot \Phi_k(W_{k, e}-W_{k, e}^-)(\varphi) \\
 &=k! \cdot \sum_{\sigma} \Phi_1(X_e- X_e^-)(\varphi_{\sigma(1)} \wedge \varphi_{\sigma(2)} \wedge \varphi_{\sigma(3)}) \prod_{i=1}^{k-1}
 \langle \varphi_{\sigma(2i+2)}, \varphi_{\sigma(2i+3)}\rangle \\
& =k! \cdot \Phi_1(X_e- X_e^-)(\varphi_{1} \wedge \varphi_{2} \wedge \varphi_{3}), 
 \end{align*}
 where $\sigma$ runs through the elements of the symmetric group $S_{2k+1}$ such that $\sigma(1) < \sigma(2) < \sigma(3)$, $\sigma(2i+2) < \sigma(2i+3)$ for $1 \leq i \leq k-1$, and 
 $\sigma(2i+2) < \sigma(2i+4)$ for $1 \leq i \leq k-2$. 
Therefore, $\Phi_k(W_{k, e}-W_{k, e}^-)$ is non-torsion. 
 \end{proof}

\begin{cor}
Let $N$ be an integer which has a prime divisor $p \geq 7$ and $X=F_N$ be the Fermat curve of degree $N$.   
Then $\Phi_k(W_{k, e}-W_{k, e}^-)$ is non-torsion 
for any $e \in F_N$ and $k=1, \ldots, g-2$. 
\end{cor}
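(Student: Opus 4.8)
The plan is to reduce the statement to the single case $k=1$, which is already available, and then propagate it by Proposition \ref{red}. For $k=1$ there is nothing new to prove: the assertion that $\Phi_1(F_{N, e} - F_{N, e}^-)$ is non-torsion for \emph{every} base point $e \in F_N$ is precisely the theorem of Eskandari-Murty \cite{EM2, EM} recalled in the introduction, which applies since $N$ has the prime divisor $p$. So I would simply invoke their result to settle $k=1$.

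For $2 \leq k \leq g-2$ I would apply Proposition \ref{red} with $X = F_N$. The only hypothesis to check is that $F_N$ is a smooth projective curve of genus $g \geq 3$, under which the proposition was stated: as $p \geq 7$ divides $N$ we have $N \geq 7$, whence $g = (N-1)(N-2)/2 \geq 15$, so indeed $g \geq 3$ and the range $2 \leq k \leq g-2$ is nonempty. Since $\Phi_1(F_{N, e} - F_{N, e}^-)$ is non-torsion by the case $k=1$, Proposition \ref{red} immediately yields that $\Phi_k(W_{k, e} - W_{k, e}^-)$ is non-torsion for each such $k$ and every $e \in F_N$. Combined with $k=1$, this proves the claim for all $k = 1, \ldots, g-2$.

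I do not expect a genuine obstacle, since the entire content lies in the two inputs being combined. The substantive work is external: on one side the Abel-Jacobi computation of Eskandari-Murty settling $k=1$ uniformly in the base point, and on the other the combinatorial identity of \cite[Proposition 3.7]{Otsubo} underlying Proposition \ref{red}, which transports the non-torsion property from degree one to higher symmetric powers by producing a decomposable class $\varphi_1 \wedge \cdots \wedge \varphi_{2k+1}$ on which the symplectic pairings collapse the sum over $\sigma$ to a single non-torsion term. The one thing to verify is the bookkeeping that such a class exists, namely that the genus is large enough to furnish the auxiliary symplectic pairs $e_{i+3}, f_{i+3}$; this is exactly the hypothesis $k \leq g-2$ already present in the statement.
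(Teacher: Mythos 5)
Your overall strategy is exactly the paper's: settle $k=1$ by citation and propagate to $2 \le k \le g-2$ via Proposition \ref{red}, whose only hypothesis (enough symplectic pairs, i.e. $k \le g-2$) you correctly check. The propagation step is fine.

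There is, however, a concrete gap in your treatment of the base case $k=1$: the corollary assumes only $p \ge 7$, while the Eskandari--Murty theorem you invoke — as recalled in the introduction of this paper — is stated for a prime divisor $p > 7$. Your claim that their result ``applies since $N$ has the prime divisor $p$'' therefore fails precisely when the only prime divisor of $N$ that is $\ge 7$ is $7$ itself (e.g.\ $N = 7, 14, 21, 49, \ldots$), and this boundary case is not a vacuous one. The paper closes it by citing a separate result of Kimura \cite[Section 4]{Kimura} for $p = 7$; without that (or some other input covering $F_7$ and its multiples for every base point $e$), your argument proves the corollary only under the stronger hypothesis that $N$ has a prime divisor strictly greater than $7$.
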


\begin{proof}
By Proposition \ref{red}, we are reduced to the case $k=1$, which is a theorem of 
Kimura \cite[Section 4]{Kimura} if $p=7$ and Eskandari and Murty \cite[Theorem 1.1]{EM2} if $p>7$. 
\end{proof}

\subsection{Harris-Pulte formula}
In this subsection, we recall the Harris-Pulte formula, which is a relation between the Abel-Jacobi image of the Ceresa cycle and an extension class of mixed Hodge structures on the space of quadratic iterated integrals on the curve $X$. 

We put 
$$(H^1 \otimes H^1)' = \operatorname{Ker}(\cup \colon H^1\otimes H^1 \to H^2(\jac(X))). $$
Then the map 
$$\phi \colon H^1 \otimes (H^1 \otimes H^1)' \to \wedge^3 H^1, $$
which is obtained by restricting the natural quotient map $(H^1)^{\otimes 3} \to \wedge^3H^1$, 
is surjective (\cite[Lemma 4.7]{Pulte}), and  
induces the injective map
$$\phi^* \colon {\rm Ext}_{{\rm MHS}(\Z)}(\wedge^3H^1, \Z(-1)) \to {\rm Ext}_{{\rm MHS}(\Z)}(H^1 \otimes (H^1 \otimes H^1)', \Z(-1)). $$

Let $\pi_1(X, e)$ be the fundamental group. Let $I$ be the augmentation ideal of the group ring $\Z[\pi_1(X, e)]$, i.e.  the kernel of the degree map
$$\Z [\pi_1(X, e)] \to \Z; \quad \sum n_i \gamma_i  \mapsto \sum n_i. $$
By Chen's $\pi_1$-de Rham theorem, $\operatorname{Hom}(\Z[\pi_1(X, e)]/I^{s+1}, \R)$ is generated by closed iterated integrals of length $\leq s$. 
Using this, Hain \cite{Hain} defines a $\Z$-mixed Hodge structure on $\Z[\pi_1(X, e)]/I^s$  such that the natural map $\Z[\pi_1(X, e)]/I^s \to \Z[\pi_1(X, e)]/I^t$ for $s \geq t$ is a morphism of mixed Hodge structures.   
Consider the exact sequence of mixed Hodge structures
\begin{align} \label{exact}
0 \to I^2/I^3 \to I/I^3 \to I/I^2 \to 0.
\end{align}
The map $\pi_1(X, e) \to I/I^2; \gamma \mapsto \gamma -1$ is well-defined and induces an isomorphism
$$H_1(X, \Z) \xrightarrow{\simeq} I/I^2$$
of Hodge structures of weight $-1$. On the other hand, the multiplication $I/I^2 \otimes I/I^2 \to I^2/I^3$ induces an isomorphism
$$\operatorname{Hom}(I^2/I^3, \Z) \xrightarrow{\simeq} (H^1 \otimes H^1)'$$ 
of Hodge structures of weight $2$. 
Taking the dual of \eqref{exact}, we have an exact sequence
 $$0\to  H^1  \to L_2(X, e)\to  (H^1 \otimes H^1)' \to 0,$$
 where we put $L_2(X, e)=\operatorname{Hom}(I/I^3, \Z)$. 
Let $\infty \neq e$ be another point on $X$. 

Put $U=X-\{\infty\}$. We identify $H^1(U)$ and  $H^1$ via the map induced by the inclusion $U \subset X$. 
Then we can obtain an exact sequence of mixed Hodge structures  
$$0\to  H^1  \to L_2(U, e)\to  H^1 \otimes H^1 \to 0$$
similarly as above. 
We have a commutative diagram 
$$
   \xymatrix{
0 \ar[r] & H^1\ar[r]  \ar@{}[d]|*{\parallel} &L_2(X, e) \ar[r]  \ar@{}[d]|*{\cap} & (H^1 \otimes H^1)' \ar[r] \ar@{}[d]|*{\cap} &0\\ 
0 \ar[r] & H^1\ar[r] &L_2(U, e) \ar[r]& H^1 \otimes H^1 \ar[r]&0.
}
$$
Let $\mathbb{E}_e$ (resp. $\mathbb{E}_e^{\infty}$) be an extension class of the top (resp. bottom) row. 
We regard 
$\mathbb{E}_e$ as an element of 
\begin{align*}
{\rm Ext}_{{\rm MHS}(\Z)}( (H^1 \otimes H^1)', H^1) &\cong {\rm Ext}_{{\rm MHS}(\Z)}((H^1)^{\vee} \otimes (H^1 \otimes H^1)', \Z(0)) \\
&\cong {\rm Ext}_{{\rm MHS}(\Z)}(H^1 \otimes (H^1 \otimes H^1)', \Z(-1)),
\end{align*}
and $\mathbb{E}_e^{\infty}$ as an element of 
\begin{align*}
{\rm Ext}_{{\rm MHS}(\Z)}( H^1 \otimes H^1, H^1) &\cong {\rm Ext}_{{\rm MHS}(\Z)}((H^1)^{\vee} \otimes H^1 \otimes H^1, \Z(0)) \\
&\cong {\rm Ext}_{{\rm MHS}(\Z)}(H^1 \otimes H^1 \otimes H^1, \Z(-1)). 
\end{align*}
 Here we used the Poincar\'e duality $H^1(1) \cong (H^1)^{\vee}$. 
 One sees that $\mathbb{E}_e$ is the restriction of $\mathbb{E}_e^{\infty}$ to $H^1 \otimes (H^1 \otimes H^1)'$. 
 Then Harris's formula \cite[Section 4]{Harris1}, reworked by Pulte \cite[Theorem 4.10]{Pulte}, is
\begin{align*} 
\phi^* \circ \Phi_1(X_e-X^-_e) =2 \mathbb{E}_e
\end{align*}
under the identification $J_1(\jac(X))={\rm Ext}_{{\rm MHS}(\Z)}(\wedge^3 H^1, \Z(-1))$.

\subsection{The decomposition of $(H^1)^{\otimes 3}$}
In this subsection, for a $\Z$-mixed Hodge structure $H$, we consider the image of $H$ under the forgetful functor ${\rm MHS}(\Z) \to {\rm MHS(\Q)}$, which we denote by the same letter. 
The Hodge structure $(H^1)^{\otimes 3}$ can be decomposed in MHS($\Q$) as follows.  
Let $\xi_{\Delta} \in H^1 \otimes H^1$ be the K\"{u}nneth component of the Hodge class of the diagonal of $X$
 in $H^2(X \times X)$. 
Then we have a decomposition
 $$H^1 \otimes H^1 \otimes H^1 =(H^1 \otimes \langle \xi_{\Delta} \rangle ) \oplus (H^1 \otimes (H^1 \otimes H^1)').$$
Since the Mumford-Tate group of $H^1$ is reductive, the map $\phi$ admits a section $\sigma$ in MHS($\Q$), and we have
 $$H^1 \otimes (H^1 \otimes H^1)' = \operatorname{ker}(\phi) \oplus \sigma(\wedge^3H^1). $$
 Let $\overline{\xi}_{\Delta}$ be the image of $\xi_{\Delta}$ in $\wedge^2H^1$. 
 Then we have a decomposition in MHS($\Q$)
 \begin{align*} 
 H^1 \otimes H^1 \otimes H^1 = (H^1 \otimes \langle \xi_{\Delta} \rangle ) \oplus \operatorname{ker}(\phi) \oplus \sigma(H^1 \wedge \langle \overline{\xi}_{\Delta} \rangle) \oplus \sigma((\wedge^3H^1)_{\rm prim}),
 \end{align*}
 where the last summand (primitive part) is the kernel of the map $\wedge^3H^1 \to \wedge^{2g-1}H^1$ given by wedging by $\overline{\xi}_{\Delta}^{g-2}$ (cf. \cite[Section 4.2]{EM}).
 We put $\mathbb{E} :=\mathbb{E}_e |_{\sigma((\wedge^3H^1)_{\rm prim})}$. 
 Then $\mathbb{E}$ is independent of the choice of $e$ (\cite[Theorem 3.9]{Pulte} and \cite{Harris1}).  
%
%

\begin{prop} \label{prop} \ 
\begin{enumerate}
\item 
Suppose that $-2g[\infty]+2[e] +K=0$.    
Then $\mathbb{E}_{e}^{\infty} =0$ if and only if $\mathbb{E}_{e} =0$. 
\item Suppose that $(2g-2)[e]-K =0$.  
Then $\mathbb{E}_{e}=0$ if and only if $\mathbb{E}=0$.  
\end{enumerate}
\end{prop}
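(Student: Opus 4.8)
The plan is to use the two direct sum decompositions of $(H^1)^{\otimes 3}$ in ${\rm MHS}(\Q)$ recorded above, which split the relevant ${\rm Ext}$-groups and thereby express each of $\mathbb{E}$, $\mathbb{E}_e$, $\mathbb{E}_e^{\infty}$ as a tuple of components indexed by the summands. In both parts one implication is immediate. Since $\mathbb{E}_e$ is the restriction of $\mathbb{E}_e^{\infty}$ to $H^1 \otimes (H^1 \otimes H^1)'$, the vanishing of $\mathbb{E}_e^{\infty}$ forces $\mathbb{E}_e=0$; and since $\mathbb{E}$ is by definition the restriction of $\mathbb{E}_e$ to $\sigma((\wedge^3H^1)_{\rm prim})$, the vanishing of $\mathbb{E}_e$ forces $\mathbb{E}=0$. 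The content of the proposition is therefore the reverse implication in each case, which I would reduce to the vanishing of a single complementary component.

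For (i), using $(H^1)^{\otimes 3}=(H^1\otimes\langle\xi_{\Delta}\rangle)\oplus(H^1\otimes(H^1\otimes H^1)')$ I would write $\mathbb{E}_e^{\infty}$ as the pair $(\mathbb{E}_e^{\infty}|_{H^1\otimes\langle\xi_{\Delta}\rangle},\,\mathbb{E}_e)$, so that $\mathbb{E}_e^{\infty}=0$ if and only if $\mathbb{E}_e=0$ together with the vanishing of the first component. For (ii), I would first observe that by the Harris--Pulte formula $2\mathbb{E}_e=\phi^{*}\Phi_1(X_e-X_e^-)$ is pulled back along $\phi$, and hence restricts to $0$ on $\operatorname{ker}(\phi)$; consequently $\mathbb{E}_e$ is, over $\Q$, the pair $(\mathbb{E}_e|_{\sigma(H^1\wedge\langle\overline{\xi}_{\Delta}\rangle)},\,\mathbb{E})$, so that $\mathbb{E}_e=0$ if and only if $\mathbb{E}=0$ together with the vanishing of the first component.

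The key step is to identify these two complementary components. Because $\xi_{\Delta}$ and $\overline{\xi}_{\Delta}$ are Hodge classes spanning copies of $\Q(-1)$, each of the summands $H^1\otimes\langle\xi_{\Delta}\rangle$ and $\sigma(H^1\wedge\langle\overline{\xi}_{\Delta}\rangle)$ is isomorphic to $H^1(-1)$; hence by Poincar\'e duality and Carlson's isomorphism the corresponding component lies in
$${\rm Ext}_{{\rm MHS}(\Q)}(H^1(-1),\Q(-1))\cong{\rm Ext}_{{\rm MHS}(\Q)}(\Q(0),H_1)\cong\jac(X)\otimes\Q.$$
I would compute each component directly from Hain's mixed Hodge structure on $\Z[\pi_1(X,e)]/I^3$ via the length-$\le 2$ iterated integrals of Chen's $\pi_1$-de Rham theorem, tracking the coupling to the diagonal class $\xi_{\Delta}$ (respectively to the puncture $\infty$ for the open curve $U$). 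This is the analytic input furnished by the formulas of Kaenders and Darmon--Rotger--Sols, and I expect the outcome to be that the component in (ii) is a rational multiple of the class of $(2g-2)[e]-K$ and that the component in (i) is a rational multiple of the class of $-2g[\infty]+2[e]+K$ in $\jac(X)\otimes\Q$.

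Granting this identification the proposition follows: the divisors $(2g-2)[e]-K$ and $-2g[\infty]+2[e]+K$ have degree zero and hence define classes in $\jac(X)\otimes\Q$, which vanish precisely under the respective hypotheses; the complementary components then vanish and the reverse implications hold. The main obstacle is exactly this last identification---evaluating the complementary ${\rm Ext}$-components as Abel--Jacobi classes of the stated degree-zero divisors---which requires unwinding Hain's construction at length two and is where the hypotheses on the divisor classes enter in an essential way.
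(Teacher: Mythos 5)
Your proposal follows essentially the same route as the paper: the same two decompositions of $(H^1)^{\otimes 3}$ in ${\rm MHS}(\Q)$, the same reduction to the vanishing of the complementary components, and the correct identification of those components with the classes of $-2g[\infty]+2[e]+K$ and $(2g-2)[e]-K$ in $\jac(X)\otimes\Q$. The identifications you flag as the remaining obstacle are precisely the results the paper simply cites---Kaenders (Theorem 1.2) for (i), and Pulte (Corollary 6.7), together with Harris--Pulte for the vanishing on $\operatorname{ker}(\phi)$, for (ii)---so no new computation is required; note only that Darmon--Rotger--Sols enters the paper's argument later (in the proof of the main theorem), not in this proposition.
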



\begin{proof}
(i) The statement follows from that 
 $\mathbb{E}_e^{\infty}|_{H^1 \otimes (H^1 \otimes H^1)'}=\mathbb{E}_e$ and a result of Kaenders {\cite[Theorem 1.2]{Ka}} that 
$$\mathbb{E}_e^{\infty}|_{H^1 \otimes \langle \xi_{\Delta} \rangle}=-2g[\infty]+2[e] +K$$
 under the identification (cf. \cite[Section 4.3.1]{EM})
$${\rm Ext}_{{\rm MHS}(\Q)}(H^1 \otimes  \langle \xi_{\Delta} \rangle, \Q(-1)) \cong \Ch_0(X)_{\rm hom} \otimes \Q. $$

(ii) The statement follows from that results of Harris {\cite[Section 3]{Harris1} and Pulte \cite[Theorem 4.10]{Pulte} }] that $\mathbb{E}_e|_{\operatorname{ker}(\phi)}=0$, and Pulte {\cite[Corollary 6.7]{Pulte}} that 
$$\mathbb{E}_e|_{\sigma(H^1 \wedge \langle \overline{\xi}_{\Delta} \rangle)}=(2g-2)[e]-K$$
under the identification (cf. \cite[Section 4.3.3]{EM})
$${\rm Ext}_{{\rm MHS}(\Q)}(\sigma(H^1 \wedge \langle \overline{\xi}_{\Delta} \rangle), \Q(-1)) \cong \Ch_0(X)_{\rm hom} \otimes \Q. $$
\end{proof}

\subsection{Darmon-Rotger-Sols formula}
 Let $\Delta \in  \Ch_1(X \times X)$ be the diagonal of $X$, and
$$p_i \colon X \times X \to X \quad (i=1, 2)$$
be the projection to the $i$-th component. 
For $Z \in \Ch_1(X \times X)$, put 
\begin{align*}
&Z_{12}=(p_1)_*(Z \cdot \Delta)=(p_2)_*(Z \cdot \Delta), \\ 
&Z_1=(p_1)_*(Z \cdot (X \times \{e\})), \quad Z_2=(p_2)_*(Z \cdot (\{e\} \times X)) \in \Ch_0(X). 
\end{align*}
Put 
$$P_Z=Z_{12}-Z_1-Z_2-(\deg(Z_{12})-\deg(Z_1)-\deg(Z_2))[e] \in \jac(X). $$
Then the point $P_Z$ is related to the extension $\mathbb{E}_e^{\infty}$ as follows. 
Let $\xi_Z$ be the $H^1 \otimes H^1$ K\"{u}nneth component of the class of $Z$ in $H^2(X \times X)$.  
Let 
\begin{align*}
\xi_Z^{-1} \colon {\rm Ext}_{{\rm MHS}(\Z)}((H^1)^{\otimes 3}, \Z(-1)) \to {\rm Ext}_{{\rm MHS}(\Z)}(H^1, \Z(-1)) \cong J_0(X) =\jac(X), 
\end{align*}
where the first arrow is the pull back along the morphism $H^1 \to (H^1)^{\otimes 3}$ defined by $\omega \mapsto \omega \otimes \xi_Z$.  
Then we have the following.  
\begin{prop}[{\cite[Corollary 2.6]{DRS}}]  \label{extmap}
For any $Z \in \Ch_1(X \times X)$, we have 
\begin{align*}  
\xi^{-1}_Z(\mathbb{E}_e^{\infty}) =\left(\int_{\Delta}\xi_Z \right) ([\infty] -[e])-P_Z
\end{align*}
in $\jac(X)$. 
\end{prop}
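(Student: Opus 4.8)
The plan is to evaluate both sides through Carlson's explicit description of extension classes and the iterated-integral realisation of $\mathbb{E}_e^{\infty}$, matching the left-hand side with the Abel--Jacobi period attached to the $0$-cycle that defines $P_Z$. First I would record that under Carlson's isomorphism ${\rm Ext}_{{\rm MHS}(\Z)}(H^1, \Z(-1)) \cong \jac(X)$ (\cite{Carlson}) an extension class is represented by the difference between a Betti (integral) splitting and a Hodge splitting of a fixed lift of the generator; concretely, a degree-$0$ divisor $D$ is sent to the functional $\omega \mapsto \int_{\gamma}\omega$ on holomorphic forms, where $\gamma$ is a path with $\partial\gamma=D$. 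Thus the right-hand side, assembled from the $0$-cycle $Z_{12}-Z_1-Z_2-(\deg Z_{12}-\deg Z_1-\deg Z_2)[e]$ together with a multiple of $[\infty]-[e]$, is a period of exactly this form, and it remains to identify the left-hand side as the period attached to the same divisor.

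Next I would unwind $\mathbb{E}_e^{\infty}$ via Hain's mixed Hodge structure on $\Z[\pi_1(U,e)]/I^3$ (\cite{Hain}): in the extension $0\to H^1\to L_2(U,e)\to H^1\otimes H^1\to 0$ the Betti splitting is the topological monodromy trivialisation and the Hodge splitting is given by holomorphic iterated integrals, so its Carlson periods are the length-$\le 2$ iterated integrals $\int_{\gamma}\omega_1\,\omega_2$ on $U=X\setminus\{\infty\}$. The pullback $\xi_Z^{-1}$ contracts these periods against $\xi_Z$; since this depends only on $\xi_Z\in H^1\otimes H^1$, I may choose a convenient cycle representing $\xi_Z$ and realise $\xi_Z$ by currents supported on $Z$. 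The quadratic iterated integral then localises along the intersections of $Z$ with the diagonal $\Delta$ and with the fibres $X\times\{e\}$ and $\{e\}\times X$, which are precisely the $0$-cycles $Z_{12}$, $Z_1$ and $Z_2$; a Stokes/residue computation collapses the double integral into a single period of $\omega$ along a path bounding $Z_{12}-Z_1-Z_2-(\deg Z_{12}-\deg Z_1-\deg Z_2)[e]$, that is, into the Abel--Jacobi image of $-P_Z$.

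The step I expect to be the main obstacle is the precise bookkeeping in this localisation: arranging signs and multiplicities so that the boundary terms assemble exactly into $-P_Z$, and extracting the residual term $(\int_{\Delta}\xi_Z)([\infty]-[e])$. The coefficient $\int_{\Delta}\xi_Z=\int_X\Delta^{*}\xi_Z$ is the self-pairing of $\xi_Z$ along the diagonal, and it multiplies $[\infty]-[e]$ because the extension is computed on the punctured curve $U$: the monodromy around $\infty$ contributes a winding proportional to the diagonal intersection number, which shifts the natural base point from $e$ toward $\infty$. Controlling this puncture- and base-point dependence is where the argument is most delicate, and the comparison with Kaenders' formula \cite{Ka} for the $\langle\xi_{\Delta}\rangle$-component provides the needed check on the constant. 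Finally, the identity may be verified on a spanning family of cycle classes --- fibres (for which both sides vanish), the diagonal, and graphs of correspondences --- where $\xi_Z$ and the intersection $0$-cycles $Z_{12},Z_1,Z_2$ are all computable in closed form.
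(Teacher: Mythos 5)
The paper offers no proof of this proposition: it is imported wholesale from Darmon--Rotger--Sols \cite[Corollary 2.6]{DRS}, so there is no internal argument to compare yours against. On its own terms, your outline does track the strategy that DRS actually use --- Carlson's description of ${\rm Ext}_{{\rm MHS}(\Z)}(H^1,\Z(-1))\cong\jac(X)$ as the difference of an integral and a Hodge splitting, Hain's mixed Hodge structure on $\Z[\pi_1(U,e)]/I^3$ realised by iterated integrals of length $\le 2$, and the contraction of those periods against a representative of $\xi_Z$ supported on $Z$, which localises the double integral on $Z\cdot\Delta$, $Z\cdot(X\times\{e\})$ and $Z\cdot(\{e\}\times X)$. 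So the skeleton is right.

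But as written this is a plan rather than a proof, and the entire content of the statement sits in the step you defer. One must actually choose a Green's current for $Z$, justify that the resulting quadratic iterated integrals represent the extension class modulo periods, and carry out the integration by parts tracking signs and multiplicities so that the boundary terms assemble into $-P_Z$ with the leftover coefficient equal to $\int_{\Delta}\xi_Z$ --- the pairing of $\Delta$ against the \emph{middle K\"unneth component} only, i.e. $\deg(Z\cdot\Delta)-\deg Z_1-\deg Z_2$, which your ``winding around the puncture'' heuristic does not by itself produce. Two of your fallbacks also do not work. First, you say you may ``choose a convenient cycle representing $\xi_Z$'': the left-hand side indeed depends only on $\xi_Z$, but $P_Z$ is built from the actual intersections of the given $Z$, so the current must be supported on that $Z$; that the combination $P_Z-\bigl(\int_{\Delta}\xi_Z\bigr)([\infty]-[e])$ depends only on $\xi_Z$ is a \emph{consequence} of the formula, not something you may assume. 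Second, ``verifying on a spanning family of cycle classes'' is unavailable for the same reason, and in any case algebraic $(1,1)$-classes need not span $H^1\otimes H^1$. The check against Kaenders' formula \cite{Ka} constrains only the $\langle\xi_{\Delta}\rangle$-direction. Since the paper simply cites \cite{DRS}, the appropriate course is to do the same; if you want a self-contained proof, the localisation computation has to be written out in full.
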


\section{Proof of Theorem \ref{main}}
There are $3N$ points on $F_N$ 
$$P_i=(0 : \zeta_N^i : 1), \quad Q_i= (\zeta_N^i : 0 : 1), \quad R_i =(\xi_N \zeta_N^i : 1: 0), \quad (i \in \Z/N\Z) $$ 
where we put $\xi_N=\exp(\pi i /N)$.    
Fix $P_0$ as the base point, then the above points are torsion points in $\jac(F_N)$ \cite{GR}. 
Therefore for the base point $\pi_N^{a, b}(P_0)$, the images of these points under $(\pi_N^{a, b})_*$ are also torsion in $\jac(C_N^{a, b})$. 
We shall continue to use the notation as in the previous section, specializing $X=C_N^{a, b}$, $e=\pi_N^{a, b}(P_0)$ and $\infty = \pi_N^{a, b}(Q_0)$.

\begin{lem} \label{can}
Let $K_C$ (resp. $g$) be the canonical divisor (resp. genus) of $C_N^{a, b}$. 
Then $K_C-(2g-2)[e]$, $K_C-2g[\infty]+2[e] \in \jac(C_N^{a, b})$ are torsion points. 
\end{lem}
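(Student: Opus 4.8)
The plan is to show that the canonical divisor $K_C$ on $C_N^{a,b}$ differs from a multiple of $[e]$ (and, respectively, from a combination of $[e]$ and $[\infty]$) by a torsion element of $\jac(C_N^{a,b})$. The natural strategy is to pull everything back to the Fermat curve $F_N$, where the relevant divisors are built out of the ramification points $P_i, Q_i, R_i$, which are already known to be torsion in $\jac(F_N)$ by \cite{GR}. Concretely, I would first compute the canonical divisor of $C_N^{a,b}$ explicitly in terms of the ramification data of the map $C_N^{a,b}\to\mathbb{P}^1$, $(x,y)\mapsto x$: by the Riemann-Hurwitz/adjunction formula the canonical class is supported on the fibers over $x=0,1,\infty$, i.e.\ on $e=\pi_N^{a,b}(P_0)$, $\infty=\pi_N^{a,b}(Q_0)$, and the point(s) above $x=1$, with coefficients read off from the ramification indices already recorded in Section 2.

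Next I would express the difference $K_C-(2g-2)[e]$ as a $\Z$-linear combination of differences of branch points all lying over $x=0,1,\infty$. The key observation is that the three branch fibers are linearly equivalent after suitable scaling: the rational functions $x$ and $1-x$ on $C_N^{a,b}$ give principal divisors relating the fibers over $0$, $1$, and $\infty$, so that each branch point becomes, modulo principal divisors, a $\Z$-combination of the single point $e$ plus a difference that pulls back to torsion on $F_N$. Here I would invoke the commutative diagram
\[
   \xymatrix{
    F_N \ar[r] \ar[d]_{\pi_N^{a, b}} & \jac(F_N) \ar[d]^{(\pi_N^{a, b})_*} \\
   C^{a, b}_N   \ar[r]  & \jac(C^{a, b}_N)
}
\]
from Section 2, together with the fact that pushforward $(\pi_N^{a,b})_*$ sends torsion to torsion: since the points $P_i, Q_i, R_i$ are torsion in $\jac(F_N)$ relative to $P_0$, their images, and hence any $\Z$-combination of the branch points of $C_N^{a,b}$ expressed through them, are torsion in $\jac(C_N^{a,b})$.

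I expect the main obstacle to be bookkeeping the ramification precisely: the number of points above $x=0,1,\infty$ is $\gcd(N,a)$, $\gcd(N,b)$, $\gcd(N,a+b)$ respectively, and one must check that each of these branch points individually pulls back (or is comparable under $\pi_N^{a,b}$) to a torsion divisor, not merely that the whole fiber does. The clean way around this is to work on $F_N$ from the start: pull back $K_C-(2g-2)[e]$ under $(\pi_N^{a,b})^*$, verify using the Hurwitz formula for $\pi_N^{a,b}$ that the pullback is a $\Z$-combination of the $P_i,Q_i,R_i$ (whose torsion is guaranteed by \cite{GR}), and then conclude by the injectivity of $(\pi_N^{a,b})^*$ up to torsion (equivalently, that $(\pi_N^{a,b})_*(\pi_N^{a,b})^*$ is multiplication by the degree $N$, so a torsion preimage forces a torsion image). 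The second assertion, concerning $K_C-2g[\infty]+2[e]$, follows by the identical argument after swapping the roles of the fibers over $0$ and $\infty$ and adjusting the coefficients, again reducing to torsion of the branch points on $F_N$.
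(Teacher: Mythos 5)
Your proposal is correct and the ``clean way'' you settle on is essentially the paper's own argument: the paper writes $K_F=(\pi_N^{a,b})^*(K_C)+R_{\pi_N^{a,b}}$ up to principal divisors, so that $N(K_C-(2g-2)[e])=(\pi_N^{a,b})_*\bigl(K_F-R_{\pi_N^{a,b}}-(2g-2)N[P_0]\bigr)$ is a pushforward of a $\Z$-combination of the torsion points $P_i,Q_i,R_i$, exactly as you describe. The only cosmetic difference is that the paper deduces the second assertion from the first via the identity $K_C-2g[\infty]+2[e]=K_C-(2g-2)[e]-2g([\infty]-[e])$ and the torsionness of $[\infty]-[e]$, rather than rerunning the pullback computation.
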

\begin{proof}
Since 
$$K_C-2g[\infty]+2[e]=K_C -(2g-2)[e]-2g([\infty]-[e])$$
and $[\infty]-[e]$ is a torsion point, it suffices to show that $K_C -(2g-2)[e]$ is a torsion point. 
Let $K_F$ be the canonical divisor of $F_N$ and $R_{\pi_N^{a, b}}$ be the ramification divisor of $\pi_N^{a, b}$, i.e.  
\begin{align*}
&K_F=(N-1) \sum_{i=0}^{N-1}Q_i -2 \sum_{i=0}^{N-1} R_i, \\
&R_{\pi_N^{a, b}}=(\gcd(N, a)-1)\sum_{i=0}^{N-1} P_i + (\gcd(N, b)-1)\sum_{i=0}^{N-1} Q_i +(\gcd(N, a+b)-1)\sum_{i=0}^{N-1} R_i. 
\end{align*}
Then we have 
$$K_F= (\pi_N^{a, b})^*(K_C) + R_{\pi_N^{a, b}}$$
up to principal divisor (cf. \cite[Proposition 2.3, Chap.IV]{Hartshorne}). 
Therefore we have
\begin{align*}
N(K_C- (2g-2)[e])= (\pi_N^{a, b})_*\left( K_F -R_{\pi_N^{a, b}}-(2g-2)N[P_0]\right)  \label{eq11} 
\end{align*}
in $\jac(C_N^{a, b})$. 
Since $P_i$, $Q_i$ and $R_i$ are torsion in $\jac(F_N)$,  
$K_F -R_{\pi_N^{a, b}}-(2g-2)N[P_0]$ is torsion, 
which finishes the proof.   
\end{proof}

\begin{proof}[Proof of Theorem \ref{main}]
First, 
by Proposition \ref{red}, it suffices to show the case when $k=1$. 
Secondly, 
consider the map
 $$f \colon F_N \to  F_p; \quad (x_0 : y_0 : z_0) \mapsto (x_0^{N/p} : y_0^{N/p} : z_0^{N/p}). $$
 Let $\langle a \rangle \in \{0, \ldots, p-1\}$ be the representative of $a$.  
 Then $f$ descends to a map $\overline{f} \colon C^{a, b}_N \to C^{\langle a \rangle,  \langle b \rangle}_p$. 
 Since 
 $$f_{*}(\Phi_1(C_{N, e}^{a, b}-(C^{a, b}_{N,e})^{-}))=\deg \overline{f} \cdot \Phi_1 \left(C_{p, \overline{f}(e)}^{a, b}-(C^{a, b}_{p,\overline{f}(e)})^{-} \right), $$
we are reduced to the case when $N=p$. 
When $p=7$, Theorem \ref{main}  
is proved in \cite[Section 4]{Kimura}, hence  
we may assume that $p > 7$.  

By Lemma \ref{can} and Proposition \ref{prop}, it suffices to show that, for the specific choices of $e$ and $\infty$ as above,  
the element $\mathbb{E}^{\infty}_{e} \in {\rm Ext}_{{\rm MHS}(\Q)}(H^1 \otimes  H^1 \otimes H^1, \Q(-1))$ is nonzero.  
By  Lemma \ref{alpha}, the automorphism $\beta$ of  $F_p$ descends to an automorphism $\widetilde{\beta}$ of $C_p^{a, b}$; let 
$Z$ be the graph of $\widetilde{\beta}$.  
Since $[\infty] - [e]$ is torsion, it suffices to show that $P_Z$ is non-torsion by Proposition \ref{extmap}.    
Since $\widetilde{\beta} \circ \pi_p^{a, b}=\pi_p^{a, b}\circ \beta$ and $\widetilde{\beta}$ has two fixed points by Lemma \ref{fix}, 
we have
\begin{align*}
 P_Z&=([\pi_p^{a, b}(P)]+[\pi_p^{a, b}(\overline{P})]-2[e])-([\widetilde{\beta}(e)]+[\widetilde{\beta}^{-1}(e)]-2[e]) \\
&=(\pi_p^{a, b})_*(([P]+[\overline{P}]-2[O])-([\beta(O)]+[\beta^{-1}(O)]-2[O])).  
\end{align*}
The point $[\beta(O)]+[\beta^{-1}(O)]-2[O]$ is a torsion point on $\jac(F_p)$, hence 
$(\pi_p^{a, b})_*([\beta(O)]+[\beta^{-1}(O)]-2[O])$ is a torsion point on $\jac(C_p^{a, b})$. 
On the other hand, since
$a-b, a+2b, 2a+b \not \equiv 0 \pmod{p}$ by the assumption $a^2+ab+b^2 \equiv 0 \pmod{p}$, 
the point 
$$(\pi_p^{a, b})_*([P]+[\overline{P}]-2[O]) \in \jac(C_p^{a, b}) $$
is non-torsion by Theorem \ref{GR}. 
Therefore    
the point $P_Z$ is non-torsion, which finishes the proof. 
\end{proof}

\section*{Acknowledgment}
The author would like to sincerely thank Noriyuki Otsubo for valuable discussions and his careful reading on a draft of this paper.  
He would like to thank Yoshinosuke Hirakawa for valuable discussions and many helpful comments. 
He also thanks Yuki Goto and Ryutaro Sekigawa for valuable discussions. 
This paper is a part of the outcome of research performed under Waseda University Grant for Special Research Projects (Project number: 2023C-274) and Kakenhi Applicants (Project number: 2023R-044).

\end{document}